\title{On extremal cacti with respect to the edge revised Szeged index}
\author{Shengjie  He$^{\rm a}$, Rong-Xia Hao$^{\rm a}$\footnote{Corresponding author.
Emails: he1046436120@126.com (Shengjie  He), rxhao@bjtu.edu.cn (Rong-Xia Hao), lidm@mail.cnu.edu.cn (Deming Li)}, Deming Li$^{\rm b}$\\
{\small\em $^{\rm a}$Department of Mathematics, Beijing Jiaotong University, Beijing,
100044, China}\\
{\small\em $^{\rm b}$Department of Mathematics, Capital Normal University, Beijing, 100048, China}\\
}
\date{} \textwidth 16cm \textheight 22cm \topmargin 0 cm \hoffset
\date{} \textwidth 16cm \textheight 22cm \topmargin 0 cm \hoffset
\newtheorem{theorem}{Theorem}[section]
\newtheorem{lemma}[theorem]{Lemma}
\begin{document}
\baselineskip 0.50cm \maketitle

\begin{abstract}
Let $G$ be a connected graph. The edge revised Szeged index of $G$ is defined as $Sz^{\ast}_{e}(G)=\sum\limits_{e=uv\in E(G)}(m_{u}(e|G)+\frac{m_{0}(e|G)}{2})(m_{v}(e|G)+\frac{m_{0}(e|G)}{2})$, where $m_{u}(e|G)$ (resp., $m_{v}(e|G)$) is the number of edges whose distance to vertex $u$ (resp., $v$) is smaller than the distance to vertex $v$ (resp., $u$), and $m_{0}(e|G)$ is the number of edges equidistant from both ends of $e$.
In this paper, we give the minimal and the second minimal edge revised Szeged index of cacti with order $n$ and $k$ cycles, and all the graphs that achieve the minimal and second minimal edge revised Szeged index are identified.

{\bf Keywords}: Szeged index;  Edge revised Szeged index; Cactus.

{\bf 2010 MSC}: 05C40, 05C90
\end{abstract}

\section{Introduction}

Throughout this paper, all graphs we considered are finite, undirected, and simple. Let $G$ be a connected graph with vertex set $V(G)$ and edge set $E(G)$. For a vertex $u \in V(G)$, the degree of $u$, denote by $d_{G}(u)$, is the number of vertices
which are adjacent to $u$. Let $N_{G}(u)$ be the set of all neighbours of $u$ in $G$. Call a vertex $u$ a $pendant$ $vertex$ of $G$, if $d_{G}(u)=1$ and call an edge $uv$ a $pendant$ $edge$ of $G$, if $d_{G}(u)=1$ or $d_{G}(v)=1$.
An edge $e$ is called a $cut$ $edge$ of a connected graph $G$ if $G-e$ is disconnect. For any two vertices $u, v \in V(G)$, let $d_{G}(u, v)$ denote the distance between $u$ and $v$ in $G$. Denote by $P_n$, $S_n$ and $C_n$ a path, star and cycle on $n$ vertices, respectively.

The topological indices are quantity values closely related to chemical structure which can be used in theoretical chemistry for understanding the physicochemical properties of chemical compounds.
The Wiener index is one of the oldest and the most thoroughly studied topological index. The Wiener index of a graph $G$ is defined as
$$W(G)=\sum\limits_{\{ u, v\} \subseteq V(G) } d_{G}(u, v).$$

For any edge $e=uv$ of $G$, $V(G)$ can be partitioned into three sets by comparing with the distance of the vertex in $V(G)$ to $u$ and $v$, and the three sets are as follows:
$$N_{u}(e|G)=\{ w\in V(G): d_{G}(u, w) < d_{G}(v, w) \},$$
$$N_{v}(e|G)=\{ w\in V(G): d_{G}(v, w) < d_{G}(u, w) \},$$
$$N_{0}(e|G)=\{ w\in V(G): d_{G}(u, w) = d_{G}(v, w) \}.$$
The number of vertices of $N_{u}(e|G)$, $N_{v}(e|G)$, and $N_{0}(e|G)$ are denoted by $n_{u}(e|G)$, $n_{v}(e|G)$ and $n_{0}(e|G)$, respectively.
If $G$ is a tree, then the formula $W(G)=\sum\limits_{e=uv \in E(G)}n_{u}(e|G)n_{v}(e|G)$
gives known property of the Wiener index.

A new topological index, named by Szeged index, was introduced by Gutman \cite{Gut.A}, which is an extension of the Wiener index and defined by
$$Sz(G)=\sum\limits_{e=uv \in E(G)}n_{u}(e|G)n_{v}(e|G).$$

If $e=uv$ is an edge of $G$ and $w$ is a vertex of $G$, then the distance between $e$ and $w$ is defined as $d_{G}(e,w) = {\rm{min}} \{ d_{G}(u,w),d_{G}(v,w) \}$. For $e=uv \in E(G)$, let $M_u(e|G)$ be the set of edges whose distance to the vertex $u$ is smaller than the distance to the vertex $v$, $M_v(e|G)$ be the set of edges whose distance to the vertex $v$ is smaller than the distance to the vertex $u$, and $M_{0}(e|G)$ be the set of edges equidistant from both ends of $e$. Set $m_u(e|G)=|M_u(e|G)|$, $m_v(e|G)=|M_v(e|G)|$ and $m_0(e|G)=|M_0(e|G)|$.
The edge revised Szeged index \cite{HDZB} of a graph $G$ is defined as:
$$Sz^{\ast}_{e}(G)=\sum\limits_{e=uv\in E(G)}(m_{u}(e|G)+\frac{m_{0}(e|G)}{2})(m_{v}(e|G)+\frac{m_{0}(e|G)}{2}).$$
For other results on the Szeged index, we refer to \cite{Ao.M,C.LXL,Gut.A.R,LXue.M,zhang.H,ZhouB.X}.

A cactus is a connected graph that any block is either a cut edge or a cycle. It is also a graph in which any two cycles have at most one common vertex. A cycle in a cactus is called end-block if all but one vertex of this cycle have degree 2. If all the cycles in a cactus have exactly one common vertex, then they form a $bundle$. Let $\mathcal{C}(n,k)$ be the class of all cacti of order $n$ with $k$ cycles.
Let $\mathcal{C}_{0}(n,k) \in \mathcal{C}(n,k)$ be a bundle of $k$ triangles with $n-2k-1$ pendant edges attached at the common vertex of the $k$ triangles and
$\mathcal{C}_{1}(n,k) \in \mathcal{C}(n,k)$ be a bundle of $k$ quadrangles with $n-3k-1$ pendant edges attached at the
common vertex of the $k$ quadrangles (see Fig. 1).
For other research on cacti, we refer to \cite{LISC1, LISC2}.

\vspace{0.8cm}

\begin{center}   \setlength{\unitlength}{0.7mm}
\begin{picture}(30,65)

\put(-65,20){\circle*{1}}
\put(-55,20){\circle*{1}}
\put(-45,20){\circle*{1}}
\put(-20,20){\circle*{1}}
\put(-35,40){\circle*{1}}
\put(-55,60){\circle*{1}}
\put(-65,60){\circle*{1}}
\put(-50,60){\circle*{1}}
\put(-40,60){\circle*{1}}
\put(-15,60){\circle*{1}}
\put(-25,60){\circle*{1}}
\put(-65,60){\line(1,0){10}}
\put(-50,60){\line(1,0){10}}
\put(-25,60){\line(1,0){10}}
\put(-35,40){\line(-3,-2){30}}
\put(-35,40){\line(-1,-1){20}}
\put(-35,40){\line(-1,-2){10}}
\put(-35,40){\line(3,-4){15}}
\put(-35,40){\line(-3,2){30}}
\put(-35,40){\line(-1,1){20}}
\put(-35,40){\line(-3,4){15}}
\put(-35,40){\line(-1,4){5}}
\put(-35,40){\line(1,2){10}}
\put(-35,40){\line(1,1){20}}
\put(-39,18){$\cdots$}
\put(-32,18){$ \cdots $}
\put(-36,58.5){$\cdots$}
\put(-72,14){$n-2k-1$ pendant vertices}
\put(-53,67){$k$ triangles}
\put(-50,5){$\mathcal{C}_{0}(n,k)$}

\put(35,20){\circle*{1}}
\put(45,20){\circle*{1}}
\put(55,20){\circle*{1}}
\put(80,20){\circle*{1}}
\put(65,40){\circle*{1}}
\put(45,60){\circle*{1}}
\put(35,60){\circle*{1}}
\put(50,60){\circle*{1}}
\put(60,60){\circle*{1}}
\put(85,60){\circle*{1}}
\put(75,60){\circle*{1}}

\put(40,65){\circle*{1}}
\put(55,65){\circle*{1}}
\put(80,65){\circle*{1}}
\put(35,60){\line(1,1){5.1}}
\put(45,60){\line(-1,1){5.1}}

\put(50,60){\line(1,1){5.1}}
\put(60,60){\line(-1,1){5.1}}

\put(75,60){\line(1,1){5.1}}
\put(85,60){\line(-1,1){5.1}}

\put(65,40){\line(-3,-2){30}}
\put(65,40){\line(-1,-1){20}}
\put(65,40){\line(-1,-2){10}}
\put(65,40){\line(3,-4){15}}
\put(65,40){\line(-3,2){30}}
\put(65,40){\line(-1,1){20}}
\put(65,40){\line(-3,4){15}}
\put(65,40){\line(-1,4){5}}
\put(65,40){\line(1,2){10}}
\put(65,40){\line(1,1){20}}
\put(61,18){$\cdots$}
\put(68,18){$ \cdots $}
\put(64,58.5){$\cdots$}
\put(18,14){$n-3k-1$ pendant vertices}
\put(47,70){$k$ quadrangles}
\put(50,5){$\mathcal{C}_{1}(n,k)$}

\put(-25,-5){Fig. 1. $\mathcal{C}_{0}(n, k)$ and $\mathcal{C}_{1}(n, k)$}

\end{picture} \end{center}

\vspace{0.1cm}

The edge revised Szeged index was introduced by Dong et al. \cite{HDZB} as a new deformation of Szeged index. Liu and Chen \cite{LMM} given an upper bound of the edge revised Szeged index for a connected bicyclic graphs.
In \cite{MHKH}, a matrix method was used to obtain the exact
formulae for computing the Szeged index of join and composition of graphs.
In \cite{Fag.M}, the edge revised Szeged index of the Cartesian product of graphs was computed.
Liu et al. in \cite{Yu} characterized the graph with minimum Szeged index among all the unicyclic graphs with given order and diameter.
Wang \cite{WANGSJ} determined the lower bound on revised Szeged index of cacti with $n$ vertices and $k$ cycles.
In this paper, by using the methods similar to Wang \cite{WANGSJ}, the edge revised Szeged index of the cacti with $n$ vertices and $k$ cycles is studied.
Moreover, the lower bound on edge revised Szeged index of the cacti with given cycles is determined and the corresponding extremal graph is identified. Furthermore, the second minimal edge revised Szeged index of the cacti with given cycles is established as well.

\section{Useful lemmas}

In this section, we will introduce three useful lemmas which will be used frequently in next sections.

Let $G$ be a graph with $|E(G)|=m$. For any edge $e=uv$, by the fact that $m_{u}(e|G)+m_{v}(e|G)+m_{0}(e|G)=m$, we have that
\begin{eqnarray*}
Sz^{\ast}_{e}(G)&=&\sum\limits_{e=uv\in E(G)}(m_{u}(e|G)+\frac{m_{0}(e|G)}{2})(m_{v}(e|G)+\frac{m_{0}(e|G)}{2})\\
&=&\sum\limits_{e=uv\in E(G)}(\frac{m+m_{u}(e|G)-m_{v}(e|G)}{2})(\frac{m+m_{v}(e|G)-m_{u}(e|G)}{2})\\
&=&\sum\limits_{e=uv\in E(G)}\frac{m^{2}-(m_{u}(e|G)-m_{v}(e|G))^{2}}{4}\\
&=&\frac{m^{3}}{4}-\frac{1}{4}\sum\limits_{e=uv\in E(G)}(m_{u}(e|G)-m_{v}(e|G))^{2}.\\
\end{eqnarray*}

For any edge $e=xy\in E(G)$, by using the fact that $m_{x}(e|G)+m_{y}(e|G)+m_{0}(e|G)=m$ and $e\in M_{0}(e|G)$, one has that $m_{x}(e|G)+m_{y}(e|G) \leq m-1$, $|m_{x}(e|G)-m_{y}(e|G)| \leq m-1$, and the last equality holds if and only if $e=xy$ is a pendant edge. Thus, we have the following Lemma~\ref{lem2.1}.

\begin{lemma}\label{lem2.1}
Let $e=uv$ be an edge of $G$ and $|E(G)|=m$. Then
$$(m_{u}(e|G)-m_{v}(e|G))^{2}\leq (m-1)^{2}$$
with equality if and only if $e=uv$ is a pendant edge.
\end{lemma}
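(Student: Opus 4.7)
The proof plan is essentially already sketched in the paragraph preceding the lemma, but I would make the two ingredients (the inequality and the equality condition) explicit and separate.

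First I would establish that the edge $e = uv$ itself belongs to $M_0(e|G)$. Indeed, by the convention $d_G(e,w) = \min\{d_G(u,w),d_G(v,w)\}$, one has $d_G(e,u) = \min\{0,1\} = 0 = d_G(e,v)$, so $e$ is equidistant from its own endpoints. Consequently $m_0(e|G) \geq 1$, and since $m_u(e|G) + m_v(e|G) + m_0(e|G) = m$, it follows that $m_u(e|G) + m_v(e|G) \leq m - 1$. Combining this with the trivial bound $|m_u(e|G) - m_v(e|G)| \leq m_u(e|G) + m_v(e|G)$ gives $(m_u(e|G) - m_v(e|G))^2 \leq (m-1)^2$.

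Next I would handle the equality case. Equality in the chain above forces simultaneously $m_0(e|G) = 1$ (so that $e$ is the only edge equidistant from $u$ and $v$) and $\min\{m_u(e|G),m_v(e|G)\} = 0$. Assume without loss of generality $m_v(e|G) = 0$, so every edge other than $e$ is strictly closer to $u$ than to $v$. To show $e$ is pendant, I would argue by contradiction: if $d_G(v) \geq 2$, pick any neighbour $w \neq u$ of $v$ and consider $f = vw$. Then $d_G(f,v) = 0$ while $d_G(f,u) = \min\{d_G(v,u),d_G(w,u)\} \geq 1$, so $f \in M_v(e|G)$, contradicting $m_v(e|G) = 0$. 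Hence $d_G(v) = 1$, and $e$ is a pendant edge. The symmetric case $m_u(e|G) = 0$ yields $d_G(u) = 1$.

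Conversely, if $e = uv$ is pendant, say $d_G(v) = 1$, then every edge $f \neq e$ has both endpoints lying in the component of $G - e$ containing $u$, so any path from $f$ to $v$ must pass through $u$; this gives $d_G(f,u) < d_G(f,v)$, hence $f \in M_u(e|G)$. Therefore $m_v(e|G) = 0$ and $m_0(e|G) = 1$, so $m_u(e|G) = m - 1$ and $(m_u(e|G) - m_v(e|G))^2 = (m-1)^2$.

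The only non-routine step is the contrapositive in the equality analysis, where one must rule out non-pendant edges by exhibiting a specific edge incident to $v$ that lies in $M_v(e|G)$; the rest is bookkeeping with the identity $m_u + m_v + m_0 = m$.
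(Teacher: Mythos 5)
Your proof is correct and follows the same route as the paper, which establishes the inequality in the paragraph immediately preceding the lemma via $e\in M_{0}(e|G)$ and the identity $m_{u}(e|G)+m_{v}(e|G)+m_{0}(e|G)=m$. In fact you go further than the paper, which merely asserts the equality characterization; your explicit argument that $m_{0}(e|G)=1$ and $\min\{m_{u}(e|G),m_{v}(e|G)\}=0$ force $e$ to be pendant (and the converse) supplies details the paper omits.
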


\begin{lemma}\label{lem2.2}
Let $G$ be a graph with an even cycle $C_{2k}$ such that $G-E(C_{2k})$ has exactly $2k$ connected components and $|E(G)|=m$. Then
$$\sum\limits_{e=uv\in C_{2k}}(m_{u}(e|G)-m_{v}(e|G))^{2} \leq 2k(m-2k)^{2} $$
with equality if and only if $C_{2k}$ is an end-block.
\end{lemma}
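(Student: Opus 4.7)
The plan is to index the cycle vertices as $v_1, v_2, \ldots, v_{2k}$ in cyclic order and, for each $j$, let $T_j$ denote the connected component of $G - E(C_{2k})$ containing $v_j$, writing $t_j := |E(T_j)|$ so that $\sum_{j=1}^{2k} t_j = m - 2k$. The key observation is that since the $T_j$'s are pairwise disjoint components of $G - E(C_{2k})$, every path from $T_j$ to anything outside $T_j$ must leave through $v_j$; hence the distances among cycle vertices in $G$ coincide with those in $C_{2k}$, and for any edge $f$ of $T_j$ and any cycle vertex $v$ one has $d_G(f,v) = d_{T_j}(f,v_j) + d_{C_{2k}}(v_j,v)$.

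Fix a cycle edge $e = v_i v_{i+1}$. Using the observation above together with the easy fact that $v_i, v_{i-1}, \ldots, v_{i-k+1}$ are exactly the $k$ cycle-vertices strictly closer to $v_i$ in $C_{2k}$ (and the other $k$ are strictly closer to $v_{i+1}$, with none equidistant since $2k$ is even), I deduce that every edge of $T_j$ lies in $M_{v_i}(e|G)$ if $v_j$ is closer to $v_i$ and in $M_{v_{i+1}}(e|G)$ otherwise. Among the cycle edges themselves, $e$ and the antipodal edge $v_{i+k}v_{i+k+1}$ are equidistant from $v_i$ and $v_{i+1}$, while the remaining $2k-2$ split evenly into $k-1$ on each side. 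Setting $A_i := t_i + t_{i-1} + \cdots + t_{i-k+1}$ (indices modulo $2k$), I therefore obtain
$$m_{v_i}(e|G) - m_{v_{i+1}}(e|G) = \bigl((k-1) + A_i\bigr) - \bigl((k-1) + (m-2k-A_i)\bigr) = 2A_i - (m-2k).$$
Because $0 \le A_i \le m-2k$, the right-hand side has absolute value at most $m-2k$, so squaring and summing over the $2k$ cycle edges yields the claimed bound.

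For the equality case, one needs $A_i \in \{0, m-2k\}$ for every $i$. Each $t_j$ appears in exactly $k$ consecutive windows, so $\sum_{i=1}^{2k} A_i = k(m-2k)$, which forces exactly $k$ of the $A_i$ to equal $m-2k$ and the other $k$ to equal $0$. If $m = 2k$, then all $t_j = 0$ and $C_{2k}$ is the entire graph, which is trivially an end-block. If $m > 2k$, then the cyclic sequence $A_i$ must jump from $0$ to $m-2k$ at some position; since $A_{i+1} - A_i = t_{i+1} - t_{i-k+1}$ and all $t_j \ge 0$ sum to $m-2k$, such a jump forces $t_{i+1} = m-2k$ and every other $t_j = 0$, which is precisely the condition that $C_{2k}$ is an end-block. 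The converse direction is immediate: if only $T_{j_0}$ has edges, each window either contains $j_0$ (giving $A_i = m-2k$) or not (giving $A_i = 0$).

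The step I expect to be delicate is the equality analysis, specifically verifying that the window-sum constraint $A_i \in \{0, m-2k\}$ really forces all non-cycle edges to sit above a single vertex of $C_{2k}$. The averaging identity $\sum_i A_i = k(m-2k)$ combined with the telescoping identity $A_{i+1} - A_i = t_{i+1} - t_{i-k+1}$ makes the argument fall out, but one must be careful to separate out the degenerate case $m = 2k$.
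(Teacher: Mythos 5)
Your proof is correct, and the main inequality is obtained exactly as in the paper: both arguments determine $M_{v_i}(e|G)$ and $M_{v_{i+1}}(e|G)$ explicitly for each cycle edge $e=v_iv_{i+1}$ (the $k$ components hanging off the nearer half of the cycle plus $k-1$ cycle edges on each side, with $e$ and its antipodal edge equidistant) and then bound each squared difference by $\bigl(\sum_j t_j\bigr)^2=(m-2k)^2$. Where you genuinely diverge is the equality analysis. The paper argues directly: if two components $G_a$ and $G_b$ both contain edges, then for the ``midpoint'' cycle edge $v_cv_{c+1}$ with $c=\lfloor (a+b)/2\rfloor$ the sets $E(G_a)$ and $E(G_b)$ fall on opposite sides, so that single term is strictly below $(m-2k)^2$. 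You instead combine the averaging identity $\sum_i A_i = k(m-2k)$ with the telescoping relation $A_{i+1}-A_i=t_{i+1}-t_{i-k+1}$ to force all the mass into a single $t_j$. Both are sound; your version has the minor advantages of isolating the degenerate case $m=2k$ and of avoiding the (true but unverified in the paper) claim that $a$ and $b$ really do land in opposite windows of the midpoint edge, at the cost of a somewhat longer argument.
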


\begin{proof}
Let $V(C_{2k})= \{ u_{1}, u_{2},  \cdots,  u_{2k}\}$ and $C_{2k}=u_{1}u_{2} \cdots u_{2k}u_{1}$. For each $1 \leq i \leq 2k$, let $G_{i}$ be the component of $G-E(C_{2k})$ that contains $u_{i}$.
Denote $m_{i}=|E(G_{i})|$, then we have that $\sum_{j=1}^{2k}m_{j}=m-2k$. For any edge $u_{i}u_{i+1}$, it can be checked that
$M_{_{u_{i}}}(u_{i}u_{i+1}|G)=E(G_{i}) \cup E(G_{i-1}) \cup \cdots E(G_{i-k+1}) \cup \{u_{i}u_{i-1}, u_{i-1}u_{i-2}, \cdots, u_{i-k+2}u_{i-k+1} \}$
and $M_{_{u_{i+1}}}(u_{i}u_{i+1}|G)=E(G_{i+1}) \cup E(G_{i+2}) \cup \cdots E(G_{i+k}) \cup \{u_{i+1}u_{i+2}, u_{i+2}u_{i+3}, \cdots, u_{i+k-1}u_{i+k} \}$
(where the subscripts are taken modulo $2k$).
Hence, it can be checked that
\begin{align}
&[m_{u_{i}}(u_{i}u_{i+1}|G)-m_{u_{i+1}}(u_{i}u_{i+1}|G)]^{2}\nonumber\\
&=(m_{i}+m_{i-1}+\cdots+m_{i-k+1}+k-1-m_{i+1}-m_{i+2}-\cdots-m_{i+k}-k+1)^{2}\nonumber\\
&\leq (\sum\limits_{j=1}^{2k}m_{j})^{2}\\
&=(m-2k)^{2}\nonumber.
\end{align}
Note that equality (1) holds if and only if $m_{j}=0$ for all $j=i, i-1, \cdots, i-k+1$ or $m_{j}=0$ for all $j=i+1, i+2, \cdots, i+k$.

On the other hand, if there are at least two positive integers among $m_1,m_2,\cdots,m_{2k}$, say $m_{a} > 0$ and $m_{b} > 0$, without loss of generality, assume $a<b$. Let $c= \lfloor  \frac{a+b}{2} \rfloor$. It can be checked that $E(G_{a}) \in M_{_{u_{c}}}(u_{c}u_{c+1}|G)$ and $E(G_{b})\in M_{_{u_{c+1}}}(u_{c}u_{c+1}|G)$.
Thus, one has that
$$(m_{u_{c}}(u_{c}u_{c+1}|G)-m_{u_{c+1}}(u_{c}u_{c+1}|G))^{2} < (m-2k)^{2}.$$
Thus, $\sum\limits_{e=uv\in C_{2k}}(m_{u}(e|G)-m_{v}(e|G))^{2} = 2k(m-2k)^{2} $ if and only if there is at most one positive integer among $m_1,m_2,\cdots,m_{2k}$, i.e., $C_{2k}$ is an end-block.
\end{proof}

\begin{lemma}\label{lem2.3}
Let $G$ be a graph with an odd cycle $C_{2k+1}$ such that $G-E(C_{2k+1})$ has exactly $2k+1$ connected components and $|E(G)|=m$. Then
$$\sum\limits_{e=uv\in C_{2k+1}}(m_{u}(e|G)-m_{v}(e|G))^{2} \leq 2k(m-2k-1)^{2} $$
with equality if and only if $C_{2k+1}$ is an end-block.
\end{lemma}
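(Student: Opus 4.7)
The plan is to follow the structure of Lemma 2.2, but with an important new feature: on an odd cycle $C_{2k+1}$, for each cycle edge $e=u_iu_{i+1}$ there is a unique vertex $u_{i+k+1}$ of the cycle equidistant from $u_i$ and $u_{i+1}$, and the whole subgraph $G_{i+k+1}$ hanging off this antipodal vertex is absorbed into $M_{0}(e|G)$ rather than into either one-sided set. Writing $C_{2k+1}=u_1u_2\cdots u_{2k+1}u_1$, letting $G_i$ be the component of $G-E(C_{2k+1})$ containing $u_i$, and setting $m_i=|E(G_i)|$ and $S=\sum_{j=1}^{2k+1}m_j=m-2k-1$, a distance-chase around the cycle shows that
\begin{align*}
M_{u_i}(u_iu_{i+1}|G)&=E(G_i)\cup\cdots\cup E(G_{i-k+1})\cup\{u_iu_{i-1},\ldots,u_{i-k+2}u_{i-k+1}\},\\
M_{u_{i+1}}(u_iu_{i+1}|G)&=E(G_{i+1})\cup\cdots\cup E(G_{i+k})\cup\{u_{i+1}u_{i+2},\ldots,u_{i+k-1}u_{i+k}\},
\end{align*}
with subscripts modulo $2k+1$, and with $E(G_{i+k+1})\cup\{e\}\subseteq M_0(e|G)$.

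From this description I obtain
\[m_{u_i}(e|G)-m_{u_{i+1}}(e|G)=\sum_{j=i-k+1}^{i}m_j-\sum_{j=i+1}^{i+k}m_j,\]
a difference of two non-negative sums whose total is $S-m_{i+k+1}$. Hence $(m_{u_i}(e|G)-m_{u_{i+1}}(e|G))^2\le(S-m_{i+k+1})^2$, and summing over $i$ and reindexing $j=i+k+1$ gives
\[\sum_{e\in C_{2k+1}}(m_u(e|G)-m_v(e|G))^2\le\sum_{j=1}^{2k+1}(S-m_j)^2=(2k-1)S^2+\sum_{j=1}^{2k+1}m_j^2.\]

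I close the argument with the elementary bound $\sum_{j}m_j^2\le\bigl(\sum_{j}m_j\bigr)^2=S^2$, valid since the $m_j$ are non-negative integers, which yields the desired $2kS^2=2k(m-2k-1)^2$. For equality, if $C_{2k+1}$ is an end-block then at most one $m_j$ is non-zero, so $\sum m_j^2=S^2$, and a short case analysis on the position of this unique non-zero $m_j$ relative to $i$ verifies that every inequality in the chain is tight; conversely, equality forces $\sum m_j^2=S^2$, which already implies at most one $m_j$ is positive, i.e.\ $C_{2k+1}$ is an end-block. The main subtlety compared with Lemma 2.2 is that bounding each term individually by $(m-2k-1)^2$ only yields the too-weak $(2k+1)(m-2k-1)^2$; the missing $S^2$ worth of slack is recovered precisely by the antipodal component being absorbed into $M_0$ and then by the final convexity step.
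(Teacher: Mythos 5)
Your proof is correct and follows essentially the same route as the paper's: bound each squared difference by $(S-m_{i+k+1})^2$ using that the antipodal component falls into $M_{0}(e|G)$, sum and reindex, then apply $\sum_j m_j^2\le\bigl(\sum_j m_j\bigr)^2$, with the same equality characterization. The only quibble is that your lists of cycle edges in $M_{u_i}(e|G)$ and $M_{u_{i+1}}(e|G)$ each stop one edge short (each side in fact contains $k$ cycle edges, e.g.\ $u_{i-k+1}u_{i-k}\in M_{u_i}(e|G)$), but since these counts cancel in the difference $m_{u_i}(e|G)-m_{u_{i+1}}(e|G)$ this does not affect the argument.
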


\begin{proof}
Let $C_{2k+1}=v_{1}v_{2} \cdots v_{2k+1}v_{1}$. For each $1 \leq i \leq 2k+1$, let $G_{i}$ be the component of $G-E(C_{2k+1})$ that contains $v_{i}$.
Denote $m_{i}=|E(G_{i})|$, then one has that $\sum_{j=1}^{2k+1}m_{j}=m-2k-1$. For any edge $v_{i}v_{i+1}$, it can be checked that
$M_{_{v_{i}}}(v_{i}v_{i+1}|G)=E(G_{i}) \cup E(G_{i-1}) \cup \cdots E(G_{i-k+1}) \cup \{v_{i}v_{i-1}, v_{i-1}v_{i-2}, \cdots, v_{i-k+1}v_{i-k} \}$
and $M_{_{v_{i+1}}}(v_{i}v_{i+1}|G)=E(G_{i+1}) \cup E(G_{i+2}) \cup \cdots E(G_{i+k}) \cup \{v_{i+1}v_{i+2}, v_{i+2}v_{i+3}, \cdots, v_{i+k}v_{i+k+1} \}$
(where the subscripts are taken modulo $2k+1$).
Hence, it can be checked that
\begin{align}
&(m_{v_{i}}(v_{i}v_{i+1}|G)-m_{v_{i+1}}(v_{i}v_{i+1}|G))^{2}\nonumber\\
&=(m_{i}+m_{i-1}+\cdots+m_{i-k+1}+k-m_{i+1}-m_{i+2}-\cdots-m_{i+k}-k)^{2}\nonumber\\
& \leq (\sum\limits_{j=1}^{2k+1}m_{j}-m_{i+k+1})^{2}\\
&=(m-2k-1-m_{i+k+1})^{2}\nonumber.
\end{align}
The equality (2) holds if and only if $m_{j}=0$ for all $j=i, i-1, \cdots, i-k+1$ or $m_{j}=0$ for all $j=i+1, i+2, \cdots, i+k$.

Thus, we have
\begin{align*}
&\sum\limits_{e=uv\in C_{2k+1}}(m_{u}(e|G)-m_{v}(e|G))^{2} \\
&\leq  \sum\limits_{i=1}^{2k+1}(m-2k-1-m_{i+k+1})^{2}\\
&=(2k+1)(m-2k-1)^{2}-2(m-2k-1)\sum\limits_{i=1}^{2k+1}m_{i+k+1}+\sum\limits_{i=1}^{2k+1}m_{i+k+1}^{2}\\
&=(2k+1)(m-2k-1)^{2}-2(m-2k-1)^{2}+\sum\limits_{i=1}^{2k+1}m_{i+k+1}^{2}\\
&\leq (2k+1)(m-2k-1)^{2}-2(m-2k-1)^{2}+(\sum\limits_{i=1}^{2k+1}m_{i+k+1})^{2}\\
&=(2k+1)(m-2k-1)^{2}-2(m-2k-1)^{2}+(m-2k-1)^{2}\\
&=2k(m-2k-1)^{2}.
\end{align*}

If there is at least two positive integers among $m_1,m_2,\cdots,m_{2k+1}$, say $m_{a} > 0$ and $m_{b} > 0$. Without loss of generality, assume $a<b$.
Let $c= \lfloor  \frac{a+b}{2} \rfloor$. It can be checked that $E(G_{a}) \in M_{_{u_{c}}}(u_{c}u_{c+1}|G)$ and $E(G_{b})\in M_{_{u_{c+1}}}(u_{c}u_{c+1}|G)$.
Then,
$$(m_{u_{c}}(u_{c}u_{c+1}|G)-m_{u_{c+1}}(u_{c}u_{c+1}|G))^{2} < (m-2k-1-m_{c+k+1})^{2}.$$

Thus, for each $1\leq i \leq 2k+1$, one has that
$$(m_{v_{i}}(v_{i}v_{i+1}|G)-m_{v_{i+1}}(v_{i}v_{i+1}|G))^{2}=(m-2k-1-m_{i+k+1})^{2}$$
if and only if there is at most one positive integer among $m_1,m_2,\cdots,m_{2k+1}$, i.e., $C_{2k+1}$ is an end-block. On the other hand,
if there is at most one positive integer among $m_1,m_2,\cdots,m_{2k+1}$, then
$$\sum\limits_{i=1}^{2k+1}m_{i+k+1}^{2}= (\sum\limits_{i=1}^{2k+1}m_{i+k+1})^{2}$$

The lemma holds immediately.
\end{proof}

\section{Cacti with minimum edge revised Szeged index in $\mathcal{C}(n, k)$}
Recall that a bundle is a cactus in which all cycles
have exactly one common vertex. Let $G(m_{1}, m_{2}, \cdots, m_{k})$ be a bundle of $k$ cycles with lengths $m_{1}, m_{2}, \cdots, m_{k}$, respectively, and with $n+k-1-\sum\limits_{i=1}^{k}m_{i}$ pendant vertices attached to the common vertex.
In this section, we will give our result about the minimum edge revised Szeged index in $\mathcal{C}(n, k)$.
Before that we need the following Lemma \ref{lem3.1} firstly.

\begin{lemma}\label{lem3.1}
For any graph $G \in \mathcal{C}(n, k)$, suppose that $C_{1}, C_{2}, \cdots, C_{k}$ are $k$ disjoint cycles of $G$ and $|E(G)|=m$. If $m_{i}=|E(C_{i})|$ for $i=1, 2, \cdots, k$
and $m_{1}, m_{2}, \cdots, m_{t}$ are odd integers, $m_{t+1}, m_{t+2}, \cdots, m_{k}$ are even integers. Then, we have that
$$Sz^{\ast}_{e}(G) \geq \frac{2m^{2}-m}{4}+\frac{(m-1)^{2}\sum\limits_{i=1}^{k}m_{i}}{4}-\frac{\sum\limits_{i=1}^{t}(m_{i}-1)(m-m_{i})^{2}}{4}-\frac{\sum\limits_{i=t+1}^{k}m_{i}(m-m_{i})^{2}}{4}$$
with equality if and only if $G \cong G(m_{1}, m_{2}, \cdots, m_{k})$.
\end{lemma}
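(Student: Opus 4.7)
The plan is to start from the identity
$$Sz^{\ast}_{e}(G) = \frac{m^{3}}{4} - \frac{1}{4}\sum_{e=uv \in E(G)} (m_{u}(e|G) - m_{v}(e|G))^{2}$$
established at the beginning of Section~2, and to upper-bound the sum $S := \sum_{e=uv\in E(G)} (m_{u}(e|G) - m_{v}(e|G))^{2}$ by combining Lemmas~\ref{lem2.1},~\ref{lem2.2} and~\ref{lem2.3}.

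First I partition $E(G)$ into three groups: the cut edges (edges lying on no cycle, totalling $m - \sum_{i=1}^{k} m_{i}$), the edges on the odd cycles $C_{1},\ldots,C_{t}$, and the edges on the even cycles $C_{t+1},\ldots,C_{k}$. Because $G$ is a cactus, each cycle $C_{i}$ is a block and any two cycles share at most one vertex; consequently, removing $E(C_{i})$ isolates every non-cut vertex of $C_{i}$ and puts distinct cut vertices of $C_{i}$ in different components (else a second cycle sharing two vertices with $C_{i}$ would exist). Hence $G-E(C_{i})$ has exactly $m_{i}$ connected components, so Lemmas~\ref{lem2.2} and~\ref{lem2.3} are applicable to every cycle.

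Applying Lemma~\ref{lem2.1} to each cut edge, Lemma~\ref{lem2.3} to each odd cycle, and Lemma~\ref{lem2.2} to each even cycle, I obtain
$$S \leq \Big(m-\sum_{i=1}^{k} m_{i}\Big)(m-1)^{2} + \sum_{i=1}^{t} (m_{i}-1)(m-m_{i})^{2} + \sum_{i=t+1}^{k} m_{i}(m-m_{i})^{2}.$$
Substituting into the identity above and using the routine simplification $m^{3} - (m-\sum m_{i})(m-1)^{2} = (2m^{2}-m) + (m-1)^{2}\sum m_{i}$ yields the claimed lower bound.

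For the equality discussion, all three lemmas must be tight simultaneously, so every cut edge of $G$ is pendant and every cycle is an end-block with a unique cut vertex of $G$ on it. Contracting each cycle to its cut vertex produces a tree $T$ whose edges are exactly the cut edges of $G$, and every such edge remains pendant in $T$. A tree in which every edge has a degree-one endpoint must be a star, because any vertex $v$ with $d_{T}(v)\geq 2$ forces every neighbor of $v$ to be a leaf. Consequently all $k$ cycles and all pendant edges share one common vertex, giving $G \cong G(m_{1},m_{2},\ldots,m_{k})$. The main obstacle is precisely this equality analysis: the bounds from Lemmas~\ref{lem2.1}--\ref{lem2.3} alone would, a priori, allow end-block cycles and pendant edges to be distributed along a path through several cut vertices, and the star observation above is what collapses all of them to a single attachment vertex.
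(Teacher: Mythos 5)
Your proof is correct and follows essentially the same route as the paper's: the same identity $Sz^{\ast}_{e}(G)=\frac{m^{3}}{4}-\frac{1}{4}\sum_{e=uv}(m_{u}(e|G)-m_{v}(e|G))^{2}$, the same partition of $E(G)$ into cut edges, odd-cycle edges and even-cycle edges, and the same application of Lemmas~\ref{lem2.1}--\ref{lem2.3}. In fact you supply two details the paper leaves implicit --- the verification that $G-E(C_i)$ has exactly $m_i$ components so that Lemmas~\ref{lem2.2} and~\ref{lem2.3} apply, and the star/contraction argument showing that ``all cut edges pendant and all cycles end-blocks'' forces the bundle $G(m_1,\ldots,m_k)$ --- both of which are sound.
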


\begin{proof} Let $E'$ be the set of all cut edges of $G$. Then $E'=E(G)\setminus \{\cup_{i=1}^{k}E(C_{i})\}$ and $|E'|=m-\sum\limits_{i=1}^{k}m_{i}$.
By Lemma \ref{lem2.1}, one has that
$$\sum\limits_{e=uv\in E'}(m_{u}(e|G)-m_{v}(e|G))^{2} \leq (m-\sum\limits_{i=1}^{k}m_{i})(m-1)^{2}$$
with equality if and only if all cut edges are pendant edges.

By Lemma \ref{lem2.2}, we have that for $i=t+1, t+2, \cdots, k$,
$$\sum\limits_{e=uv\in C_{i}}(m_{u}(e|G)-m_{v}(e|G))^{2} \leq m_{i}(m-m_{i})^{2}$$
with equality if and only if $C_{i}$ is an end block.

In view of Lemma \ref{lem2.3}, we have that for $j=1, 2, \cdots, t$,
$$\sum\limits_{e=uv\in C_{j}}(m_{u}(e|G)-m_{v}(e|G))^{2} \leq (m_{j}-1)(m-m_{j})^{2}$$
with equality if and only if $C_{j}$ is an end block.

Thus, we have
\begin{eqnarray*}
Sz^{\ast}_{e}&=&\frac{m^{3}}{4}-\frac{1}{4}\sum\limits_{e=uv\in E(G)}(m_{u}(e|G)-m_{v}(e|G))^{2}\\
&\geq&\frac{m^{3}}{4}-\frac{(m-\sum\limits_{i=1}^{k}m_{i})(m-1)^{2}}{4}-\frac{\sum\limits_{i=1}^{t}(m_{i}-1)(m-m_{i})^{2}}{4}-\frac{\sum\limits_{i=t+1}^{k} m_{i}(m-m_{i})^{2}}{4}\\
&=&\frac{2m^{2}-m}{4}+\frac{(m-1)^{2}\sum\limits_{i=1}^{k}m_{i}}{4}-\frac{\sum\limits_{i=1}^{t}(m_{i}-1)(m-m_{i})^{2}}{4}-\frac{\sum\limits_{i=t+1}^{k} m_{i}(m-m_{i})^{2}}{4}
\end{eqnarray*}
with equality if and only if all cut edges are pendant edges and all cycles are end-block i.e., $G \cong G(m_{1}, m_{2}, \cdots, m_{k})$.
\end{proof}

\begin{theorem}\label{T3.2}
Let $G \in \mathcal{C}(n, k)$ and $|E(G)|=m$, the following statements holds:

(i) If $m \geq 15$ and $m \geq 4k$, then $Sz^{\ast}_{e}(G) \geq \frac{2m^{2}-m}{4}+k(6m-15)$ with equality if and only if $G \cong \mathcal{C}_{1}(n, k)$;

(ii) If $m \geq 15$ and $m < 4k$, then $Sz^{\ast}_{e}(G) \geq \frac{2m^{2}-m+(4k-m)[(m-9)^{2}-36]}{4}+k(6m-15)$
with equality if and only if $G \cong G(  \underbrace{3, \cdots, 3}_{4k-m}, \underbrace{4, \cdots, 4}_{m-3k})$;

(iii) If $m < 15$, then $Sz^{\ast}_{e}(G) \geq \frac{2m^{2}-m+k[(m-9)^{2}-36]}{4}+k(6m-15)$ with equality if and only if $G \cong \mathcal{C}_{0}(n, k)$.

\end{theorem}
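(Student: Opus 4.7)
The plan is to invoke Lemma \ref{lem3.1} to reduce the theorem to an elementary single-variable discrete optimization. Lemma \ref{lem3.1} bounds $Sz^{\ast}_{e}(G)$ below by an expression determined entirely by the multiset of cycle lengths $(m_{1},\ldots,m_{k})$, with equality exactly when $G$ is the bundle $G(m_{1},\ldots,m_{k})$. I would first rewrite that bound as $Sz^{\ast}_{e}(G) \geq \frac{2m^{2}-m}{4} + \frac{1}{4}\sum_{i=1}^{k} f(m_{i})$, where $f(x) = (m-1)^{2}x - (x-1)(m-x)^{2}$ for odd $x$ and $f(x) = x(x-1)(2m-x-1)$ for even $x$. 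Direct calculation yields $f(3) = m^{2}+6m-15$, $f(4) = 24m-60$, and the identity $f(3) - f(4) = (m-3)(m-15)$, which pinpoints $m = 15$ as the threshold between the three cases.

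The core of the proof is then to show that $f$ is pointwise minimized at $x \in \{3,4\}$ over all admissible cycle lengths $3 \leq x \leq m$. I would prove $f(x) \geq f(3)$ for every odd $x$ via the factorization $f(x) - f(3) = (x-3)\bigl(-x^{2} + 2(m-1)x + (2m-5)\bigr)$, whose concave quadratic factor is positive at both $x = 3$ (value $8m-20$) and $x = m$ (value $m^{2}-5$), hence positive throughout. Symmetrically, $f(x) \geq f(4)$ for every even $x$ via $f(x) - f(4) = -(x-4)\bigl(x^{2} - (2m-4)x - (6m-15)\bigr)$, whose convex quadratic factor is nonpositive at both $x = 4$ and $x = m$ for $m \geq 4$, hence nonpositive throughout. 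Equality forces $x = 3$ or $x = 4$ respectively, and combined with the threshold identity this yields $f(x) \geq \min\{f(3), f(4)\}$ across the full admissible range.

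The final step is the constrained minimization of $\sum f(m_{i})$ over $(m_{1},\ldots,m_{k})$ with each $m_{i} \in \{3,4\}$ and $\sum m_{i} \leq m$. Writing $t$ for the number of quadrangles, $\sum_{i} f(m_{i}) = k f(3) - t(m-3)(m-15)$ subject to $3k + t \leq m$. When $m < 15$ the coefficient $(m-3)(m-15)$ is negative, so the minimum is attained at $t = 0$, giving $k$ triangles and the graph $\mathcal{C}_{0}(n,k)$; when $m \geq 15$ and $m \geq 4k$, the unconstrained choice $t = k$ is feasible and yields $\mathcal{C}_{1}(n,k)$; when $m \geq 15$ but $3k \leq m < 4k$, the constraint binds at $t = m-3k$, giving $4k-m$ triangles and $m-3k$ quadrangles. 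Substituting each optimum back into the Lemma \ref{lem3.1} bound and rearranging reproduces the three formulas in the theorem, while the equality clauses propagate through Lemma \ref{lem3.1}.

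The principal technical obstacle is the sign analysis in the second step: one must verify uniformly in $m \geq 3$ that the quadratic factors in $f(x) - f(3)$ and $f(x) - f(4)$ keep a definite sign on the integer intervals $[3,m]$ and $[4,m]$, respectively. Once that is settled, the constrained optimization and the reconciliation of the three cases at the boundaries $m = 3k$, $m = 4k$, and $m = 15$ reduce to routine algebraic bookkeeping.
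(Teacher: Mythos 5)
Your proposal is correct and follows essentially the same route as the paper: reduce to the bundle bound of Lemma \ref{lem3.1}, shrink every odd cycle to a triangle and every even cycle to a quadrangle, and then minimize over the number of triangles, with the sign of $(m-3)(m-15)=(m-9)^{2}-36$ governing the three cases. The only real difference is cosmetic: where the paper checks monotonicity via the partial derivatives of $f(m_{1},\dots,m_{k})$, you obtain the same pointwise reductions $f(x)\ge f(3)$ (odd $x$) and $f(x)\ge f(4)$ (even $x$) by explicit polynomial factorizations, which is slightly more self-contained but leads to the identical two-stage optimization and the same conclusions.
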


\begin{proof}
Suppose that $C_{1}, C_{2}, \cdots, C_{k}$ are $k$ disjoint cycles of $G$ and  $m_{i}=|E(C_{i})|$ for $i=1, 2, \cdots, k$. Without loss of generality, we may therefore assume that $m_{1}, m_{2}, \cdots, m_{t}$ are odd and $m_{t+1}, m_{t+2},\\ \cdots, m_{k}$ are even. According to Lemma \ref{lem3.1}, we have that
$$Sz^{\ast}_{e}(G) \geq Sz^{\ast}_{e}(G(m_{1}, m_{2}, \cdots, m_{k})).$$

Set $f(m_{1}, m_{2}, \cdots, m_{k})=Sz^{\ast}_{e}(G(m_{1}, m_{2}, \cdots, m_{k}))$. Then it is routine to check that
$$f(m_{1}, m_{2}, \cdots, m_{k})= \frac{2m^{2}-m}{4}+\frac{(m-1)^{2}\sum\limits_{i=1}^{k}m_{i}}{4}-\frac{\sum\limits_{i=1}^{t}(m_{i}-1)(m-m_{i})^{2}}{4}-\frac{\sum\limits_{i=t+1}^{k} m_{i}(m-m_{i})^{2}}{4}$$
and
$$ \frac{\partial f(m_{1}, m_{2}, \cdots, m_{k})}{\partial m_{i}}=\left\{
                                                 \begin{array}{ll}
                                                   \frac{(m-1)^{2}-(m-3m_{i}+2)(m-m_{i})}{4}>0, & \hbox{$1 \leq i \leq t$ ;} \\
                                                   \frac{(m-1)^{2}-(m-3m_{i})(m-m_{i})}{4}>0, & \hbox{$t+1 \leq i \leq k$ .}
                                                 \end{array}
                                               \right.
  $$
So
$$f(m_{1}, m_{2}, \cdots, m_{k}) \geq f(\underbrace{3, \cdots, 3}_{t}, \underbrace{4, \cdots, 4}_{k-t}).$$

Assume $g(t)=f(\underbrace{3, \cdots, 3}_{t}, \underbrace{4, \cdots, 4}_{k-t})$, it follows that
$$g(t)= \frac{2m^{2}-m}{4}+k(6m-15)+\frac{t[(m-9)^{2}-36]}{4}$$
and
$$g'(t)=\frac{(m-9)^{2}-36}{4}.$$

Note that if $m \geq 15$ and $m-4k<0$, then there is at least $r$ triangles, where $3r+4(k-r)=m$, i.e., $r=4k-m$. So we have that

$\bullet$ If $m \geq 15$ and $m \geq 4k$, then $g'(t)\geq 0$ and $Sz^{\ast}_{e}(G) \geq  g(0)= \frac{2m^{2}-m}{4}+k(6m-15)$ with equality if and only if $G \cong \mathcal{C}_{1}(n, k)$.

$\bullet$ If $m \geq 15$ and $m < 4k$, then $g'(t)\geq 0$ and $Sz^{\ast}_{e}(G) \geq g(4k-m)= \frac{2m^{2}-m+(4k-m)[(m-9)^{2}-36]}{4}+k(6m-15)$
with equality if and only if $G \cong G(  \underbrace{3, \cdots, 3}_{4k-m}, \underbrace{4, \cdots, 4}_{m-3k})$.

$\bullet$ If $m < 15$, then $g'(t) < 0$ and $Sz^{\ast}_{e}(G) \geq g(k)= \frac{2m^{2}-m+k[(m-9)^{2}-36]}{4}+k(6m-15)$ with equality if and only if $G \cong \mathcal{C}_{0}(n, k)$.

The result follows.
\end{proof}

\section{Cactus with second minimum edge revised Szeged index in $\mathcal{C}(n, k)$}

In this section, we will determine the graphs in $\mathcal{C}(n, k)$ with the second minimum edge revised Szeged index. In what follows,
assume that $m > 15$ and $m > 4k$, and $f(m_{1}, m_{2}, \cdots, m_{k})$ is same as the proof of Theorem \ref{T3.2}.  We will give the lower bound of $Sz^{\ast}_{e}(G)$ in $\mathcal{C}(n, k)\setminus  \mathcal{C}_{1}(n, k) $ and
determine the corresponding extremal graph. Let $G^{\ast}_{1}$ be the graph that is obtained from $\mathcal{C}_{1}(n-1, k)$ by adding a pendant edge at the pendant
vertex of $\mathcal{C}_{1}(n-1, k)$, see Fig. 2.

\vspace{0.8cm}

\begin{center}   \setlength{\unitlength}{0.7mm}
\begin{picture}(30,65)

\put(-15,20){\circle*{1}}
\put(-5,20){\circle*{1}}
\put(5,20){\circle*{1}}
\put(30,20){\circle*{1}}
\put(15,40){\circle*{1}}
\put(-5,60){\circle*{1}}
\put(-15,60){\circle*{1}}
\put(0,60){\circle*{1}}
\put(10,60){\circle*{1}}
\put(35,60){\circle*{1}}
\put(25,60){\circle*{1}}

\put(0,30){\circle*{1}}

\put(-10,65){\circle*{1}}
\put(5,65){\circle*{1}}
\put(30,65){\circle*{1}}
\put(-15,60){\line(1,1){5.1}}
\put(-5,60){\line(-1,1){5.1}}

\put(0,60){\line(1,1){5.1}}
\put(10,60){\line(-1,1){5.1}}

\put(25,60){\line(1,1){5.1}}
\put(35,60){\line(-1,1){5.1}}

\put(15,40){\line(-3,-2){30}}
\put(15,40){\line(-1,-1){20}}
\put(15,40){\line(-1,-2){10}}
\put(15,40){\line(3,-4){15}}
\put(15,40){\line(-3,2){30}}
\put(15,40){\line(-1,1){20}}
\put(15,40){\line(-3,4){15}}
\put(15,40){\line(-1,4){5}}
\put(15,40){\line(1,2){10}}
\put(15,40){\line(1,1){20}}
\put(11,18){$\cdots$}
\put(18,18){$ \cdots $}
\put(14,58.5){$\cdots$}
\put(-32,14){$n-3k-2$ pendant vertices}
\put(-3,70){$k$ quadrangles}

\put(-10,5){Fig. 2. The figure $G^{\ast}_{1}$}

\end{picture} \end{center}

\vspace{0.1cm}

\begin{lemma}\label{lem4.1}
Let $G$ be a graph in $\mathcal{C}(n, k)\setminus  \mathcal{C}_{1}(n, k) $ such that there exists a cut edge that is not a
pendant edge and $|E(G)|=m$. Then
$$Sz^{\ast}_{e}(G) \geq Sz^{\ast}_{e}(G(\underbrace{4, \cdots, 4}_{k}))+m-2=f(\underbrace{4, \cdots, 4}_{k})+m-2$$
with equality if and only if $G \cong G^{\ast}_{1}$.
\end{lemma}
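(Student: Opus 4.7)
The plan is to mirror the derivation of Lemma~\ref{lem3.1}, bounding $Sz^{\ast}_{e}(G)=\frac{m^{3}}{4}-\frac{1}{4}\sum_{e=uv\in E(G)}(m_{u}(e|G)-m_{v}(e|G))^{2}$ by applying Lemmas~\ref{lem2.2} and~\ref{lem2.3} to the cycle edges and Lemma~\ref{lem2.1} to the cut edges, but this time applying a sharper estimate to the non-pendant cut edge whose existence is assumed. The extra saving that this produces in $\sum(m_{u}-m_{v})^{2}$ is exactly the additive term $m-2$ in the statement.

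The key sharpening is the following. Let $e=uv$ be a cut edge of $G$ and let $G_{u}\ni u$, $G_{v}\ni v$ be the two components of $G-e$. Then $m_{0}(e|G)=1$, so $m_{u}(e|G)+m_{v}(e|G)=m-1$; moreover, if $e$ is not pendant, then both $m_{u}(e|G)\geq 1$ and $m_{v}(e|G)\geq 1$, and therefore
\[
(m_{u}(e|G)-m_{v}(e|G))^{2}\leq (m-3)^{2},
\]
with equality if and only if one of $m_{u}(e|G),m_{v}(e|G)$ equals $1$. Since $(m-1)^{2}-(m-3)^{2}=4(m-2)$, replacing Lemma~\ref{lem2.1} by this estimate on the distinguished non-pendant cut edge $e^{\ast}$ while running the remaining inequalities of Lemma~\ref{lem3.1} as before produces
\[
Sz^{\ast}_{e}(G)\geq f(m_{1},\ldots,m_{k})+(m-2),
\]
where $m_{1},\ldots,m_{k}$ are the cycle lengths of $G$. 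To conclude the lower bound I invoke the monotonicity analysis from the proof of Theorem~\ref{T3.2}: since $m>15$ and $m>4k$, the partial derivatives $\partial f/\partial m_{i}$ are positive and $g'(t)\geq 0$, so $f(m_{1},\ldots,m_{k})\geq f(\underbrace{4,\ldots,4}_{k})$, yielding $Sz^{\ast}_{e}(G)\geq f(\underbrace{4,\ldots,4}_{k})+(m-2)$.

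The equality discussion is what I expect to be the main obstacle. Equality requires simultaneously: (a) every cycle of $G$ is an end-block of length $4$ (tightness in Lemmas~\ref{lem2.2}--\ref{lem2.3} combined with the strict monotonicity of $f$ under $m>15$, $m>4k$); (b) every cut edge other than $e^{\ast}$ is pendant (tightness in Lemma~\ref{lem2.1}); and (c) the small side of $e^{\ast}$ consists of exactly one edge, which must itself be pendant. Writing $e^{\ast}=xy$ with that single pendant edge being $yz$, the vertex $y$ has degree $2$ in $G$, so no cycle can be attached at $y$. On the other side of $e^{\ast}$, the subtree of cut edges incident to $x$ must form a star, since each of its edges is pendant in $G$; and every quadrangle must be attached at its center $x$, because attaching a quadrangle at a leaf $\ell$ of that star would make the cut edge $x\ell$ non-pendant, contradicting (b). This structural rigidity forces $G\cong G^{\ast}_{1}$, completing the equality case.
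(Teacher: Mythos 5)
Your proof is correct and follows essentially the same route as the paper's: the sharpened bound $(m_{u}(e^{\ast}|G)-m_{v}(e^{\ast}|G))^{2}\leq(m-3)^{2}$ for the non-pendant cut edge (equality iff one side of $e^{\ast}$ is a single edge), combined with the estimates of Lemmas~\ref{lem2.1}--\ref{lem2.3} and the monotonicity of $f$, with the gain $\frac{(m-1)^{2}-(m-3)^{2}}{4}=m-2$. Your equality analysis is in fact spelled out in more structural detail than the paper's, which simply asserts that the listed tightness conditions force $G\cong G^{\ast}_{1}$.
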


\begin{proof}
Suppose that $C_{1}, C_{2}, \cdots, C_{k}$ are $k$ cycles of $G$ and  $m_{i}=|E(C_{i})|$ for $i=1, 2, \cdots, k$. Without loss of generality, we may assume that $m_{1}, m_{2}, \cdots, m_{t}$ are odd and $m_{t+1}, m_{t+2}, \cdots, m_{k}$ are even.

Let $E'$ be the set of all cut edges of $G$.
Suppose that $e_{1}=xy$ is a cut edge that is not a pendant edge. Let $G_{x}$ and $G_{y}$ be the components of $G-xy$ that contain $x$ and $y$, respectively.
Then, it can be checked that
$$(m_{x}(e_{1}|G)-m_{y}(e_{1}|G))^{2}=(|E(G_{x})|-|E(G_{y})|)^{2}\leq (m-3)^{2},$$
with equality if and only if $G_{x}$ or $G_{y}$ is an edge. By Lemma \ref{lem2.1}, we have that
$$\sum\limits_{e=xy \in E'}(m_{x}(e|G)-m_{y}(e|G))^{2} \leq (m-\sum\limits_{i=1}^{k}m_{i}-1)(m-1)^{2}+(m-3)^{2}.$$

So we have that
\begin{eqnarray*}
Sz^{\ast}_{e}(G) &=&\frac{m^{3}}{4}-\frac{\sum\limits_{e=uv \in G}(m_{u}(e|G)-m_{v}(e|G))^{2}}{4}\\
&\geq&\frac{m^{3}}{4}-\frac{(m-\sum\limits_{i=1}^{k}m_{i}-1)(m-1)^{2}}{4}-\frac{(m-3)^{2}}{4}\\
&-&\frac{\sum\limits_{i=1}^{t}(m_{i}-1)(m-m_{i})^{2}}{4}-\frac{\sum\limits_{i=t+1}^{k} m_{i}(m-m_{i})^{2}}{4}\\
&=&f(m_{1}, m_{2}, \cdots, m_{k})+\frac{(m-1)^{2}}{4}-\frac{(m-3)^{2}}{4}\\
&\geq&f(\underbrace{4, \cdots, 4}_{k})+m-2.
\end{eqnarray*}

The equality of first inequality holds if and only if all cycles are end-block and all cut edges but $xy$ are pendant edges, and $xy$ has the property that $G-xy$
has a component that is an edge. The equality in second inequality holds if and only if all cycles are $C_{4}$.

Thus, we have that if there exists a cut edge that is not a pendant edge, then
$$Sz^{\ast}_{e}(G) \geq f(\underbrace{4, \cdots, 4}_{k})+m-2$$
with equality if and only if $G \cong G^{\ast}_{1}$.
\end{proof}

\begin{lemma}\label{lem4.2}
Let $G$ be a graph in $\mathcal{C}(n, k)\setminus  \mathcal{C}_{1}(n, k) $ such that there exists a cycle that is not $C_{4}$ and $|E(G)|=m$.

(i) If $G$ has an odd cycle, then $Sz^{\ast}_{e}(G) \geq f(3, \underbrace{4, \cdots, 4}_{k-1}) $ with equality if and only if $G \cong G(3, \underbrace{4, \cdots, 4}_{k-1})$.

(ii) If all cycles of $G$ are even, then $Sz^{\ast}_{e}(G) \geq f(6, \underbrace{4, \cdots, 4}_{k-1}) $.

\end{lemma}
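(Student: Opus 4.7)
The plan is to invoke Lemma~\ref{lem3.1} together with the monotonicity of $f$ established in the proof of Theorem~\ref{T3.2}, mimicking the strategy there but restricted to cycle-length tuples that avoid the all-quadrangle case. For any $G \in \mathcal{C}(n, k)$ with cycle lengths $m_{1}, \ldots, m_{k}$, Lemma~\ref{lem3.1} gives $Sz^{\ast}_{e}(G) \geq f(m_{1}, \ldots, m_{k})$ with equality if and only if $G \cong G(m_{1}, \ldots, m_{k})$. Hence it suffices to minimize $f$ over tuples $(m_{1}, \ldots, m_{k})$ compatible with the hypothesis of each part and then translate the equality condition back to $G$.

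For (i), let $t \geq 1$ be the number of odd-length cycles of $G$. The positivity of $\partial f / \partial m_{i}$ derived in the proof of Theorem~\ref{T3.2}, holding separately for $m_{i}$ odd and for $m_{i}$ even, allows us to reduce every odd $m_{i}$ down to $3$ and every even $m_{j}$ down to $4$ while only decreasing $f$; this yields $f(m_{1}, \ldots, m_{k}) \geq g(t)$ with $g(t) = f(\underbrace{3, \ldots, 3}_{t}, \underbrace{4, \ldots, 4}_{k-t})$. Since $m > 15$, the same computation gives $g'(t) = \frac{(m-9)^{2} - 36}{4} > 0$, so $g(t) \geq g(1) = f(3, 4, \ldots, 4)$. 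Because each $m_{i} \to m_{i}-2$ preserves parity, the reduced tuple still has $t \geq 1$ odd entries, so the argument stays within the allowed class. Tracing equality through Lemma~\ref{lem3.1} and each monotonicity step then forces $G \cong G(3, 4, \ldots, 4)$.

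For (ii), all $m_{i}$ are even and, since $G \not\cong \mathcal{C}_{1}(n, k)$, at least one $m_{i}$ equals $6$ or more. Let $s \geq 1$ be the number of cycles of length $\geq 6$. Using $\partial f / \partial m_{i} > 0$ once more, we may pull each of these $s$ cycles down to length $6$ and the remaining $k-s$ cycles down to length $4$, obtaining $f(m_{1}, \ldots, m_{k}) \geq h(s)$ with $h(s) = f(\underbrace{6, \ldots, 6}_{s}, \underbrace{4, \ldots, 4}_{k-s})$. A short computation from the closed form of $f$ gives $h(s) - h(s-1) = \frac{36m - 150}{4} > 0$ for $m > 15$, so $h(s) \geq h(1) = f(6, 4, \ldots, 4)$, which is the claimed bound.

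The main technical point is the two-stage reduction in part~(ii): after first minimizing each cycle length subject to its parity and the hypothesis, one must still argue that a single cycle of length $6$ is better than several. Both stages rest on the same partial-derivative positivity that powers Theorem~\ref{T3.2}, so the remaining work is essentially bookkeeping (keeping track of parities and verifying that the parity-preserving reductions never escape the class of graphs the lemma is about) rather than any genuinely new idea.
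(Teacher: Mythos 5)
Your proposal is correct and follows essentially the same route as the paper: apply Lemma~\ref{lem3.1} to reduce to $f(m_1,\ldots,m_k)$, then use the positivity of $\partial f/\partial m_i$ and of $g'(t)$ from the proof of Theorem~\ref{T3.2} to push the minimizing tuple to $(3,4,\ldots,4)$ in case (i) and to $(6,4,\ldots,4)$ in case (ii). The only (harmless) difference is in (ii), where the paper reduces $m_1\geq 6$ to $6$ and the other even $m_i$ to $4$ in a single coordinate-wise monotonicity step, whereas you insert an intermediate family $h(s)=f(6,\ldots,6,4,\ldots,4)$ and verify $h(s)\geq h(1)$ by a difference computation — extra bookkeeping that the direct argument does not need.
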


\begin{proof}
Suppose that $C_{1}, C_{2}, \cdots, C_{k}$ are $k$ cycles of $G$ and  $m_{i}=|E(C_{i})|$ for $i=1, 2, \cdots, k$. Without loss of generality, assume that $m_{1}, m_{2}, \cdots, m_{t}$ are odd and $m_{t+1}, m_{t+2}, \cdots, m_{k}$ are even.

If $t \geq 1$, with the method similar to the proofs of Lemma \ref{lem3.1} and Theorem \ref{T3.2}, one has that
$$Sz^{\ast}_{e}(G) \geq  f(m_{1}, m_{2}, \cdots, m_{k}) \geq   f(\underbrace{3, \cdots, 3}_{t}, \underbrace{4, \cdots, 4}_{k-t})  \geq f(3, \underbrace{4, \cdots, 4}_{k-1}).$$
The last inequality holds since $g(t)=f(\underbrace{3, \cdots, 3}_{t}, \underbrace{4, \cdots, 4}_{k-t}) $ is an increasing function on $t$ when $m > 15$.

Hence, we have that if $G$ has an odd cycle, then
$$Sz^{\ast}_{e}(G) \geq f(3, \underbrace{4, \cdots, 4}_{k-1}) $$
 with equality if and only if $G \cong G(3, \underbrace{4, \cdots, 4}_{k-1})$.

If $t=0$, then $m_{i}$ are even for all $i=1, 2, \cdots, k$. As there exists a cycle that is not $C_{4}$, without loss of generality, assume that
$m_{1} \geq6$. Since $f(m_{1}, m_{2}, \cdots, m_{k})$ is an increasing function on $m_i$, one has that
$$Sz^{\ast}_{e}(G) \geq   f(m_{1}, m_{2}, \cdots, m_{k})  \geq f(6, \underbrace{4, \cdots, 4}_{k-1}) .$$

The proof is finished
\end{proof}

\begin{lemma}\label{lem4.3}
Let $G$ be a graph in $\mathcal{C}(n, k)\setminus  \mathcal{C}_{1}(n, k) $ such that there exists a cycle that is not an end-block and $|E(G)|=m$. Then
$$Sz^{\ast}_{e}(G) \geq {\rm{min}} \{ f(3, \underbrace{4, \cdots, 4}_{k-1}), f(\underbrace{4, \cdots, 4}_{k})+2m-10 \}. $$

\end{lemma}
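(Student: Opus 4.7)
The plan is to split the argument on whether $G$ contains a cycle of length different from $4$. This dichotomy mirrors the two terms inside the minimum: when $G$ has a non-$C_4$ cycle, Lemma \ref{lem4.2} already supplies a bound at least as large as the claimed minimum; when every cycle of $G$ is $C_4$ but at least one is not an end-block, a refined version of Lemma \ref{lem2.2} applied to that particular cycle produces the second term $f(\underbrace{4,\ldots,4}_{k}) + 2m - 10$.

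\textbf{Case 1.} $G$ has some cycle that is not $C_4$. Lemma \ref{lem4.2} is directly applicable. If $G$ contains an odd cycle, then $Sz^{\ast}_{e}(G) \geq f(3,\underbrace{4,\ldots,4}_{k-1})$, which is one of the two values in the minimum and the bound follows. If all cycles of $G$ are even, then $Sz^{\ast}_{e}(G) \geq f(6,\underbrace{4,\ldots,4}_{k-1})$, and a routine computation using the closed form in Lemma \ref{lem3.1} gives $f(6,\underbrace{4,\ldots,4}_{k-1}) - f(\underbrace{4,\ldots,4}_{k}) = \tfrac{1}{4}(36m - 150)$, which exceeds $2m - 10$ for every $m \geq 4$, hence certainly for $m > 15$.

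\textbf{Case 2.} All cycles of $G$ are $C_4$. Let $C^{\ast} = v_1 v_2 v_3 v_4 v_1$ be a $C_4$ that is not an end-block, and let $G_i$ be the component of $G - E(C^{\ast})$ containing $v_i$, with $n_i = |E(G_i)|$. A direct computation of the sets $M_{v_j}(v_j v_{j+1}|G)$ on each edge of $C^{\ast}$, as in the proof of Lemma \ref{lem2.2}, yields
\[
\sum_{e=uv\in C^{\ast}}(m_u(e|G) - m_v(e|G))^{2} = 2(n_1+n_4-n_2-n_3)^{2} + 2(n_1+n_2-n_3-n_4)^{2} = 4\bigl[(n_1-n_3)^{2} + (n_2-n_4)^{2}\bigr].
\]
Subject to the constraints $n_i \geq 0$, $\sum_{i=1}^{4} n_i = m-4$, and at least two of the $n_i$ positive, the maximum of $(n_1-n_3)^{2} + (n_2-n_4)^{2}$ is $(m-5)^{2} + 1$, attained (up to the dihedral symmetry of $C^{\ast}$) by $(n_1,n_2,n_3,n_4) = (m-5,1,0,0)$. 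Since $4[(m-5)^{2}+1] = 4(m-4)^{2} - 8(m-5)$, we obtain
\[
\sum_{e=uv\in C^{\ast}}(m_u(e|G) - m_v(e|G))^{2} \leq 4(m-4)^{2} - 8(m-5).
\]
Combining this refined estimate for $C^{\ast}$ with Lemma \ref{lem2.2} for the remaining $k-1$ cycles and Lemma \ref{lem2.1} for the $m - 4k$ cut edges gives
\[
\sum_{e=uv\in E(G)}(m_u(e|G) - m_v(e|G))^{2} \leq (m-4k)(m-1)^{2} + 4k(m-4)^{2} - 8(m-5),
\]
and substituting into $Sz^{\ast}_{e}(G) = \tfrac{m^{3}}{4} - \tfrac{1}{4}\sum_{e}(m_u - m_v)^{2}$ followed by the $f$-formula from Lemma \ref{lem3.1} produces $Sz^{\ast}_{e}(G) \geq f(\underbrace{4,\ldots,4}_{k}) + 2m - 10$, which is the second value in the minimum.

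The main obstacle is the constrained maximization at the heart of Case 2: showing that among all non-end-block distributions of $m-4$ edges across the four components, the maximum of $(n_1-n_3)^{2} + (n_2-n_4)^{2}$ is exactly $(m-5)^{2} + 1$. The cleanest route is the change of variables $s = n_1 + n_3$, $t = n_2 + n_4$, which yields $(n_1-n_3)^{2} + (n_2-n_4)^{2} \leq s^{2} + t^{2}$ with equality iff one of $\{n_1,n_3\}$ and one of $\{n_2,n_4\}$ vanishes. A finite case analysis of the non-end-block configurations with $s + t = m-4$ then isolates two extremal candidates, namely an adjacent single-edge perturbation giving $(m-5)^{2} + 1$ and an opposite single-edge perturbation giving $(m-6)^{2}$; since $(m-5)^{2} + 1 - (m-6)^{2} = 2m - 11 > 0$ for every $m > 6$, the adjacent candidate dominates under the standing hypothesis $m > 15$, completing the justification of the key estimate.
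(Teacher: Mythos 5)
Your proof is correct and follows essentially the same route as the paper's: reduce to the all-$C_4$ case via Lemma~\ref{lem4.2}, then show the non-end-block $C_4$ contributes at most $4(m-4)^2-8(m-5)$ --- your exact identity $4[(n_1-n_3)^2+(n_2-n_4)^2]$ plus the constrained maximization replaces the paper's case analysis on whether two adjacent or two opposite cycle vertices have degree at least $3$, but both yield the identical bound since $4[(m-5)^2+1]=4(m-4)^2-8(m-5)$. The only blemish is the trivial slip $(m-5)^2+1-(m-6)^2=2m-10$, not $2m-11$, which does not affect the conclusion.
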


\begin{proof} We can first assume that all the cycles of $G$ are $C_{4}$. Otherwise, by Lemma \ref{lem4.2},
we have that $Sz^{\ast}_{e}(G) \geq f(3, \underbrace{4, \cdots, 4}_{k-1}) $
or $Sz^{\ast}_{e}(G) \geq f(6, \underbrace{4, \cdots, 4}_{k-1}) > f( \underbrace{4, \cdots, 4}_{k})+2m-10$, our conclusion follows immediately.

Now let $C_{1}=v_{1}v_{2}v_{3}v_{4}v_{1}$ be the cycle that is not an end-block, i.e., at least two of $v_{1}, v_{2}, v_{3}, v_{4}$ have degree more than 2.
Without loss of generality, let $d_{G}(v_{1}) \geq 3$. Assume that $ \{ v_{2}, v_{4}, x \} \subseteq N_{G}(v_{1})$.

If $d_{G}(v_{2}) \geq 3$ or $d_{G}(v_{4}) \geq 3$, without loss of generality, assume $d_{G}(v_{2}) \geq 3$ and $ \{ v_{1}, v_{3}, y \} \subseteq N_{G}(v_{2})$. This implies that $ \{ v_{1}v_{4}, v_{1}x \} \subseteq M_{v_{1}}(v_{1}v_{2}|G)$
and $ \{ v_{2}v_{3}, v_{2}y \} \subseteq M_{v_{2}}(v_{1}v_{2}|G)$. Similarly, one has that $ \{ v_{1}v_{4}, v_{1}x \} \subseteq M_{v_{4}}(v_{3}v_{4}|G)$ and
$ \{ v_{2}v_{3}, v_{2}y \} \subseteq M_{v_{3}}(v_{3}v_{4}|G)$. We obtain that
$$\sum\limits_{e=uv \in C_{1}} (m_{u}(e|G)-m_{v}(e|G))^{2}\leq 2(m-4)^{2}+2(m-6)^{2}=4(m-4)^{2}-8(m-5).$$

If $d_{G}(v_{2}) = d_{G}(v_{4}) =2$. It implies that $d_{G}(v_{3}) \geq 3$ and one can assume $ \{ v_{2}, v_{4}, z \} \subseteq N_{G}(v_{3})$. It is obvious that $ \{ v_{1}v_{4}, v_{1}x \} \subseteq M_{v_{1}}(v_{1}v_{2}|G)$
and $ \{ v_{2}v_{3}, v_{3}z \} \subseteq M_{v_{2}}(v_{1}v_{2}|G)$. Similarly, we have that $ \{ v_{1}v_{2}, v_{1}x \} \subseteq M_{v_{2}}(v_{2}v_{3}|G)$ and
$ \{ v_{3}v_{4}, v_{3}z \} \subseteq M_{v_{3}}(v_{2}v_{3})$; $ \{ v_{2}v_{3}, v_{3}z \} \subseteq M_{v_{3}}(v_{3}v_{4}|G)$ and
$ \{ v_{1}v_{4}, v_{1}x \} \subseteq M_{v_{4}}(v_{3}v_{4}|G)$; $ \{ v_{1}v_{2}, v_{1}x \} \subseteq M_{v_{1}}(v_{1}v_{4}|G)$ and
$ \{ v_{3}v_{4}, v_{3}z \} \subseteq M_{v_{4}}(v_{1}v_{4}|G)$. Then
$$\sum\limits_{e=uv \in C_{1}} (m_{u}(e|G)-m_{v}(e|G))^{2}\leq 4(m-6)^{2} \leq 4(m-4)^{2}-8(m-5).$$

By Lemmas \ref{lem2.1} and \ref{lem2.2}, one has that
\begin{eqnarray*}
Sz^{\ast}_{e}(G) &=&\frac{m^{3}}{4}-\frac{\sum\limits_{e=uv \in G}(m_{u}(e|G)-m_{v}(e|G))^{2}}{4}\\
&\geq&\frac{m^{3}}{4}-\frac{(m-4k)(m-1)^{2}}{4}- \frac{\sum\limits_{i=2}^{k} 4(m-4)^{2} }{4}-\frac{4(m-4)^{2}-8(m-5)}{4}\\
&=& f(\underbrace{4, \cdots, 4}_{k})+2m-10.
\end{eqnarray*}

So we have our conclusion.
\end{proof}

By Lemmas \ref{lem4.1}-\ref{lem4.3}, Theorem \ref{T4.4} holds.

\begin{theorem}\label{T4.4}
Let $G$ be a graph in $\mathcal{C}(n, k)\setminus  \mathcal{C}_{1}(n, k) $ and $|E(G)|=m$ with $m > 15$ and $m >4k$. Then
$$Sz^{\ast}_{e}(G) \geq \left\{
                                                 \begin{array}{ll}
                                                   \frac{2m^{2}+3m+4k(6m-15)-8}{4}, & \hbox{$m \geq 20$ ;} \\
                                                   \frac{3m^{2}-19m+4k(6m-15)+45}{4}, & \hbox{$15 < m \leq19$. }
                                                 \end{array}
                                               \right.
  $$
With equality if and only if $G\cong G^{\ast}_{1}$ for $m \geq 20$, and $G\cong G(3, \underbrace{4, \cdots, 4}_{k-1})$ for $15 < m \leq19$, where $G^{\ast}_{1}$
is show in Fig 2.
\end{theorem}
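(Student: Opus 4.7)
The plan is to observe that every $G\in \mathcal{C}(n,k)\setminus\mathcal{C}_1(n,k)$ must fail at least one of the three structural features that together characterize $\mathcal{C}_1(n,k)$: (a) every cut edge is pendant, (b) every cycle is a $C_4$, and (c) every cycle is an end-block. A short block-cut tree argument confirms this: if all three held, then the absence of non-pendant cut edges forces all $k$ end-block cycles and all pendants to meet at a single common vertex, which is exactly $\mathcal{C}_1(n,k)$. Hence every such $G$ falls under at least one of Lemmas~\ref{lem4.1}, \ref{lem4.2}, and~\ref{lem4.3}, yielding four candidate lower bounds: $f(\underbrace{4,\ldots,4}_{k})+m-2$, $f(3,\underbrace{4,\ldots,4}_{k-1})$, $f(6,\underbrace{4,\ldots,4}_{k-1})$, and $f(\underbrace{4,\ldots,4}_{k})+2m-10$.

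Next I would substitute the closed forms that come out of the $f$ formula used in Theorem~\ref{T3.2}. A direct computation gives
\begin{align*}
f(\underbrace{4,\ldots,4}_{k}) &= \tfrac{2m^2-m}{4}+k(6m-15),\\
f(3,\underbrace{4,\ldots,4}_{k-1}) &= \tfrac{3m^2-19m+45}{4}+k(6m-15),
\end{align*}
so the four candidate bounds become, respectively, $\tfrac{2m^2+3m-8}{4}+k(6m-15)$, $\tfrac{3m^2-19m+45}{4}+k(6m-15)$, $f(6,\underbrace{4,\ldots,4}_{k-1})$, and $\tfrac{2m^2+7m-40}{4}+k(6m-15)$. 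The third exceeds the fourth by the inequality $f(6,\underbrace{4,\ldots,4}_{k-1}) > f(\underbrace{4,\ldots,4}_{k})+2m-10$ already used in the proof of Lemma~\ref{lem4.3}, and the fourth exceeds the first because $m-2<2m-10$ for $m>8$. So the overall minimum is achieved by one of the first two.

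The core of the argument is the comparison between these two. Their difference is $\tfrac{-m^2+22m-53}{4}$, which is negative precisely when $m^2-22m+53>0$, i.e., when $m>11+\sqrt{68}\approx 19.25$. Hence for $m\geq 20$ the Lemma~\ref{lem4.1} bound $\tfrac{2m^2+3m-8+4k(6m-15)}{4}$ is the smaller, uniquely attained by $G^{\ast}_1$, giving the first case of the theorem; for $15<m\leq 19$ the Lemma~\ref{lem4.2}(i) bound $\tfrac{3m^2-19m+45+4k(6m-15)}{4}$ is the smaller, uniquely attained by $G(3,\underbrace{4,\ldots,4}_{k-1})$, giving the second case. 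The main obstacle is not any single deep inequality but the bookkeeping of the case split: one must verify that the two claimed extremal graphs both lie in $\mathcal{C}(n,k)\setminus \mathcal{C}_1(n,k)$ ($G^{\ast}_1$ does via its unique non-pendant cut edge, and $G(3,\underbrace{4,\ldots,4}_{k-1})$ does via its triangle), and that the threshold $11+\sqrt{68}$ lands strictly between $19$ and $20$ so that the two regimes cleanly partition the hypothesis $m>15$.
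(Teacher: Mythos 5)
Your proposal is correct and takes essentially the same route as the paper's own proof: the same trichotomy into the cases of Lemmas~\ref{lem4.1}--\ref{lem4.3}, the same reduction to comparing $f(\underbrace{4,\ldots,4}_{k})+m-2$ with $f(3,\underbrace{4,\ldots,4}_{k-1})$ via the difference $\frac{m^{2}-22m+53}{4}$, and the same split at the root $11+\sqrt{68}\approx 19.25$. The extra details you supply (why the trichotomy is exhaustive, why the $f(6,\underbrace{4,\ldots,4}_{k-1})$ and $f(\underbrace{4,\ldots,4}_{k})+2m-10$ candidates are dominated, and that both extremal graphs lie outside $\mathcal{C}_{1}(n,k)$) are points the paper leaves implicit, and they check out.
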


\begin{proof}

For any $G \in \mathcal{C}(n, k)\setminus  \mathcal{C}_{1}(n, k) $, one of the following three conditions hold:

$\bullet$ $G$ has a cut edge that is not a pendant edge;

$\bullet$ There is a cycle that is not a $C_{4}$;

$\bullet$ There is a cycle that is not an end-block.

By Lemmas \ref{lem4.1}-\ref{lem4.3}, we can see that
$$Sz^{\ast}_{e}(G) \geq {\rm{min}} \{ f(\underbrace{4, \cdots, 4}_{k})+m-2, f(3, \underbrace{4, \cdots, 4}_{k-1}) \}. $$

Since
$$f(3, \underbrace{4, \cdots, 4}_{k-1})-(f(\underbrace{4, \cdots, 4}_{k})+m-2)=\frac{m^{2}-22m+53}{4}.$$
The following result hold.

$\bullet$ If $m \geq 20$, then $f(3, \underbrace{4, \cdots, 4}_{k-1})-(f(\underbrace{4, \cdots, 4}_{k})+m-2)> 0$ and
$Sz^{\ast}_{e}(G) \geq f(\underbrace{4, \cdots, 4}_{k})+m-2=\frac{2m^{2}+3m+4k(6m-15)-8}{4}$ with equality if and only if $G\cong G^{\ast}_{1}$.

$\bullet$ If $15 < m \leq 19$, then $Sz^{\ast}_{e}(G) \geq f(3, \underbrace{4, \cdots, 4}_{k-1})=\frac{3m^{2}-19m+4k(6m-15)+45}{4}$
with equality if and only if $G\cong G(3, \underbrace{4, \cdots, 4}_{k-1})$.

The result follows.
\end{proof}

\section{Conclusions}

In this paper, the lower bound on edge revised Szeged index of the cacti with $n$ vertices and $k$ cycles is determined and the corresponding extremal graph is identified. Furthermore, the second minimal edge revised Szeged index of the cacti with given cycles is established as well.
For further study, it would be interesting to determine the extremal graph that has the maximum edge revised Szeged index in these class of cacti. Moreover, it would be meaningful to study the edge revised szeged index of other kinds of graphs.

\section*{Acknowledgments}

This work was supported by the National Natural Science Foundation of China
(No. 11731002), the Fundamental Research Funds for the Central Universities (Nos. 2016JBM071, 2016JBZ012) and the $111$ Project of China (B16002).


\end{document}